\newcommand{\E}{\mathbf{E}}
\renewcommand{\H}{\mathbf{H}}
\newcommand{\D}{\mathbf{D}}
\newcommand{\B}{\mathbf{B}}
\newcommand{\G}{\mathbf{G}}
\newcommand{\C}{{\mathbb C}}
\newcommand{\R}{{\mathbb R}} 
\newcommand{\e}{\varepsilon}
\renewcommand{\(}{\left(}
\renewcommand{\)}{\right)}
\renewcommand{\o}{\omega}
\newtheorem{prop}{Proposition}[section]
\newtheorem{remark}{Remark}[section]
\newtheorem{theorem}{Theorem}[section]
\newtheorem{lemma}[theorem]{Lemma}
\begin{document}

\title{Refraction laws in temporal media
}

\author{Cristian E. Guti\'errez}
\address[Cristian E. Guti\'errez]{Department of Mathematics, Temple University}
\email{gutierre@temple.edu}

\author{Eric Stachura}
\address[Eric Stachura]{Department of Mathematics, Kennesaw State University}
\email{estachur@kennesaw.edu}

\thanks{E. S. is the corresponding author. \today}

\begin{abstract}
We consider the time dependent Maxwell system in the sense of distributions in the context of temporal interfaces. Just as with spatial interfaces, electromagnetic waves at temporal interfaces scatter and create a transmitted and reflected wave. We provide a rigorous derivation of boundary conditions for the electric and magnetic fields at temporal interfaces with precise assumptions on the material parameters. In turn, we use this to obtain a general Snell's Law at such interfaces. From this, we obtain explicit formulas for the reflection and transmission coefficients. Unlike previous works, we do not make any simplifying ansatz on the solution to the Maxwell system, nor do we assume that the fields are smooth. We also consider material parameters which are not necessarily constant on either side of the temporal interface.
\end{abstract}

\maketitle

\tableofcontents

\setcounter{equation}{0}
\section{Introduction}
The area of time varying materials has received great attention lately, with the idea that time can be used to acheive additional control of electromagnetic waves. Many applications, especially related to temporal modulations of metamaterials \cite{mostafa2024temporal}, have been seen so far, and include frequency conversion, wave amplification, and the foundations of photonic time crystals, among many others. Photonic time crystals are systems which undergo periodic temporal variations while maintaining spatial uniformity \cite{lustig2023photonic}.

While the applications are vast, the mathematical analysis related to the Maxwell equations, in particular, in such time varying media is lacking in comparison (in no small part due to the mathematical challenges of time varying material parameters). For mathematical results in the time domain with time varying material parameters, we mention in particular \cite{farago2012numerical} for numerical investigations using a so-called Magnus expansion and \cite[Section 5]{stachura2024quantitative} which considers weak solutions for the time dependent Maxwell system with both space and time varying permittivity. We also mention various works that consider time dependence through temperature dependent material parameters, see in particular \cite{alam2022coupled, yin2004regularity, yin1994global} and the references therein. 

In this paper, we focus on the particular case of temporal interfaces. 
A temporal interface occurs \cite{engheta2023four} when a material parameter is changed in time rapidly while the optical wave is present in the material. One way this can be obtained is through a strong optical nonliearity and an ultrafast (on the other of 5-10 femtoseconds) laser pulse \cite{tirole2024second}.

At a temporal interface, foward and backward waves \emph{in time} (waves that propogate along the same and opposite directions of the original wave) are known to result \cite{morgenthaler1958velocity, mendoncca2002time}. For this temporal interface, the wavelength for the forward and backward waves are the same as the original wave, but the frequency changes due to the change in wave velocity. Moreover, at the temporal interface (unlike the classical spatial interface), the energy needed to change the material can affect the electromagnetic wave energy, so energy is not conserved (it can increase or decrease). We will derive these results from a rigorous mathematical perspective, and in particular, quantify the deviation from conservation of energy--see equations (\ref{conservation_of_energy1}) and (\ref{conservation_of_energy2}). This depends explicitly on the material parameters.

\begin{figure}[h!]
\centering
\begin{tikzpicture}
\begin{axis}[
  width=0.5\linewidth,
  ymin=0,
  ymax=5,
  yticklabel=\empty,
  ylabel={$(x,y,z)$},
  ylabel style={rotate=-90},
  xmin=0,
  xmax=5,
  xtick style={draw=none},
  ytick style={draw=none},
  xticklabel=\empty,
  axis x line*=bottom,
  axis y line*=left,
  xlabel= $t$,
  extra x ticks = {2.5},
  extra x tick labels={$t_0$}]
\addplot[thick,domain=0:5, black] coordinates {(2.5,0)(2.5, 5)};
\addplot[draw=none] coordinates {(1,1)};
\node[] at (axis cs: 1,4) {$n_1(t)$};
\node[] at (axis cs: 3, 4) {$n_2(t)$};
\draw[thick, black, decorate,decoration={expanding waves,angle=7}] (axis cs: 0.5, 0.5)   -- (axis cs: 2.5, 2.5);
\draw[thick, arrows={-Stealth[scale=1.5]}] (axis cs: 0.5, 0.5) -- (axis cs: 2, 2);
\draw[thick, black, decorate,decoration={expanding waves,angle=7}] (axis cs: 2.5, 2.5)   -- (axis cs: 4.5, 4.5);
\draw[thick, arrows={-Stealth[scale=1.5]}] (axis cs: 2.5, 2.5) -- (axis cs: 4, 4);
\draw[thick, black, decorate,decoration={expanding waves,angle=7}] (axis cs: 5.5, 3.5)   -- (axis cs: 3.5, 1.5);
\draw[thick, arrows={-Stealth[scale=1.5]}] (axis cs: 5.5, 3.5)   -- (axis cs: 4, 2);
\end{axis}
\end{tikzpicture}
%
%
\caption{A temporal interface where the time varying refractive index is $n_1(t)$ on one side of the interface and $n_2(t)$ on the other side.}
\label{default}
\end{figure}
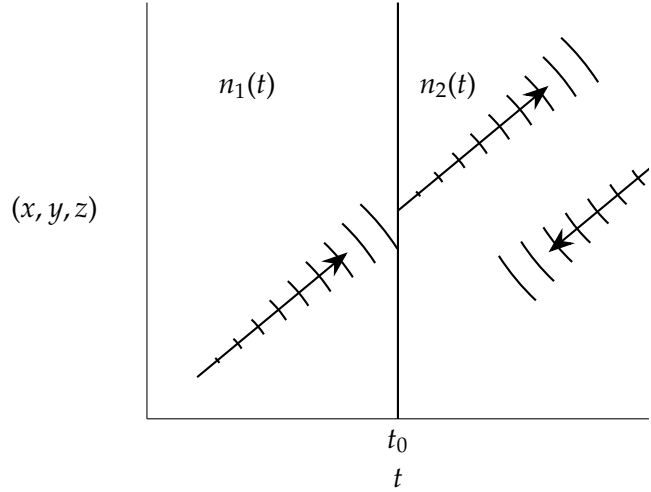

The objective of this paper is to derive the Snell law at temporal interfaces from fundamental principles. To achieve this, we will utilize the Maxwell equations. Given that the waves being considered are generally discontinuous, it is advantageous to approach this derivation from the perspective of distributions or generalized functions, thereby facilitating a comprehensive understanding of the Maxwell system.
A similar distributional analysis of Maxwell equations in the context of spatial metasurfaces has been done in \cite{2025-gutierrez-sabra:Maxwelleqsandgeneralizedsnelllaw}. 

The outline of this paper is as follows. In Section \ref{sec:Maxwell_distributions}, we analyze the Maxwell system in sense of space-time distributions. Under certain assumptions on fields which are discontinuous at a certain time, we derive a formula for the distributional time derivative of such a field, see equation (\ref{eq:formula for the distributional derivative wrt t}). Then in Section \ref{sec:boundary conditions}, under particular assumptions on the material parameters $\e, \mu$, we provide a rigorous derivation of the boundary conditions satisfied by the electric and magnetic fields at a temporal interface. Again, the time dependent Maxwell system is understood in the distributional sense here. In Section \ref{sec:SnellLaw} we use these boundary conditions to derive a fully rigorous Snell's Law for reflection and refraction at a temporal interface in the case when $\e, \mu$ vary in time but not in space, Proposition \ref{prop:snell law}. From here, in Section \ref{subsec:epsilon mu jump constants}, we consider the simplified case when $\e, \mu$ jump from one value to another (a common assumption in practice) and rigorously obtain formulas for the amplitudes, Proposition \ref{prop:formulas for the amplitudes}, and for the reflection and transmission coefficients, Section \ref{subsec:reflection and transmission coefficients}. We also note that the amplitudes of the incident, transmitted, and reflected waves are generally not arbitrary, but rather subject to the divergence equations in the Maxwell system--see Section \ref{sec:divergence}. 

\setcounter{equation}{0}
\section{Maxwell equations in the sense of distributions}\label{sec:Maxwell_distributions}

Recall the Maxwell system of equations
\begin{align}
&\nabla \cdot \D = \rho, \label{divergence E zero}\tag{M.1}\\
&\nabla \cdot \B = 0, \label{divergence B zero}\tag{M.2}\\
&\nabla \times \E = -\dfrac{\partial \B}{\partial t}\label{Faraday law}\tag{M.3}\\
&\nabla \times \H= \mathbf{J}+\dfrac{\partial \D}{\partial t}. \label{Ampere Maxwell}\tag{M.4}
\end{align}
where $\E(x,t)$ denotes the electric field, $\H(x,t)$ the magnetic field, $\D(x,t)$ the electric flux density and $\B(x,t)$ the magnetic flux density. 
We assume the charge density $\rho=0$ and the current density $\mathbf{J}=0$, and
consider the constitutive equations given by 
\begin{equation}\label{eq:constitutive}
\D=\e\,\E,\qquad \B=\mu\,\H
\end{equation}
where we assume that $\e=\e(x,t)$ and $\mu=\mu(x,t)$ are positive functions of $x\in \R^3,t\in \R$, $t\neq t_0$ for some $t_0$, that satisfy appropriate piecewise smoothness assumptions that will be described in a moment.

We begin recalling the notions needed to analyze the Maxwell system in distributional sense.
Let $\Omega\subset \R^3$ be an open domain, that could be the whole space, and let $(a,b)\subset \R$ be an open interval that could be infinite. 
A generalized function or distribution defined in $\Omega\times (a,b)$ is a complex-valued continuous linear functional defined on the class of test functions denoted by $\mathcal D(\Omega\times (a,b))=C_0^\infty(\Omega\times (a,b))$, that is, the class of functions are infinitely differentiable in $\Omega\times (a,b)$ and have compact support in $\Omega\times (a,b)$. 
More precisely, $g$ is a distribution in $\Omega\times (a,b)$ if $g:\mathcal D(\Omega\times (a,b))\to \C$ is a linear function such that for each compact $K\subset \Omega\times (a,b)$ there exist constants $C$ and an integer $k$ such that 
\begin{equation*}\label{eq:distribution estimate}
|g(\phi)|\leq C\,\sum_{|\alpha|\leq k}\sup_{x\in K}|D^\alpha\phi(x)|
\end{equation*}
for each $\phi\in \mathcal D(\Omega\times (a,b))$.
%
As customary, $\mathcal D’(\Omega\times (a,b))$ denotes the class of distributions in $\Omega\times (a,b)$. 

Numerous references are available for distributions; however, we only cite the seminal work by L. Schwartz, \cite{schwartz:theoriedesdistributions}, and also \cite{folland1999real}. 

If $g\in \mathcal D'(\Omega\times (a,b))$, then $\langle g,\phi \rangle$ denotes the value of the distribution $g$ on the test function $\phi\in \mathcal D(\Omega\times (a,b))$.
If $g$ is a locally integrable function in $\Omega\times (a,b)$, then $g$ gives rise to a distribution defined by 
\[
\langle g,\phi\rangle=\int_{\Omega\times (a,b)} g(x,t)\,\phi(x,t)\,dxdt,
\]
for each $\phi\in \mathcal D(\Omega\times (a,b))$.

We say that ${\bf G}=(G_1,G_2,G_3)$ is a vector valued distribution in $\Omega\times (a,b)$ if each component $G_i\in \mathcal D'(\Omega\times (a,b))$, $1\leq i\leq 3$.
The divergence of ${\bf G}$ with respect to $x$ is the scalar distribution defined by
\begin{equation}\label{form:divergence}
\langle \nabla \cdot {\bf G}, \phi\rangle
=-\sum_{i=1}^3 \langle G_i,\partial_{x_i} \phi\rangle,
\end{equation}
and the curl of ${\bf G}$ is the vector valued distribution in $\Omega$ defined by
\begin{equation}\label{form:curl bis}
\langle \nabla \times {\bf G},\phi\rangle=\left(\langle G_2,\phi_{x_3}\rangle-\langle G_3,\phi_{x_2}\rangle\right){\bf i}-\left(\langle G_1,\phi_{x_3}\rangle-\langle G_3,\phi_{x_1}\rangle\right){\bf j}+\left(\langle G_1,\phi_{x_2}\rangle-\langle G_2,\phi_{x_1}\rangle\right){\bf k}.
\end{equation}
Then it follows that
\begin{equation}\label{eq:div of curl zero}
\nabla \cdot (\nabla\times {\bf G})=0,
\end{equation} 
in the sense of distributions.
When the distribution ${\bf G}=(G_1,G_2,G_3)$ is given by a locally integrable in $\Omega\times (a,b)$ we obtain from \eqref{form:divergence}, and \eqref{form:curl bis} that 
\begin{equation*}
\langle \nabla \cdot {\bf G},\phi\rangle=-\int_{\Omega\times (a,b)}{\bf G}\cdot \nabla_x \phi\,dxdt,\qquad
\langle \nabla \times {\bf G},\phi\rangle=\int_{\Omega\times (a,b)} {\bf G}\times \nabla_x \phi\,dxdt.
\end{equation*}
The derivative of $\G$ with respect to $t$ in the sense of distributions is by definition the distribution $\dfrac{\partial \G}{\partial t}$ defined by
\[
\left\langle \dfrac{\partial \G}{\partial t},\phi \right\rangle
=
-\left\langle \G, \dfrac{\partial \phi}{\partial t}\ \right\rangle,
\]
for each $\phi\in \mathcal D(\Omega\times (a,b))$.

Therefore, the fields $\D,\B,\E$ and $\H$ solve the Maxwell system in the sense of distributions if 
$\D,\B,\E$ and $\H$ are vector valued distributions in $\Omega\times (a,b)$ that satisfy the equations \eqref{divergence E zero}, \eqref{divergence B zero}, \eqref{Faraday law}, and \eqref{Ampere Maxwell} in the sense of distributions.

\subsection{General formulas}\label{sec:general_formulas}
Given that our application to temporal interfaces needs the consideration of distributions represented by discontinuous functions, we require a comprehensive set of representation formulas that will be utilized in Section \ref{sec:boundary conditions} to prove the boundary conditions.

Let $\Omega\subset \R^3$ be an open domain, which could be all of $\R^3$, $t_0\in (a,b)\subset \R$, $(a,b)$ could be infinite, and we have a field $\G(x,t)$ defined for $x\in \Omega$ and $t\neq t_0$ as follows:
\begin{equation*}
\G(x,t)
=
\begin{cases}
\G_-(x,t) &\text{for $x\in \Omega$ and $t<t_0$}\\
\G_+(x,t) &\text{for $x\in \Omega$ and $t>t_0$}
\end{cases}
\end{equation*}
and satisfying
%
\begin{enumerate}
\item\label{eq:G is C1} $\G_-\in C^1\(\Omega\times (a,t_0)\)$,  $\G_+\in C^1\(\Omega\times (t_0,b)\)$, and $\G\in L^1_{\text{loc}}\(\Omega\times (a,b)\)$\footnote{As usual, $L^1_{\text{loc}}\(\Omega\times (a,t_0)\)$ denotes the class of complex valued functions that are locally Lebesgue integrable in $\Omega\times (a,t_0)$.};
\item\label{eq:limits are finite} the limits
\[
\lim_{t\to t_0^-}\G_-(x,t):=\G_-(x,t_0),\qquad \lim_{t\to t_0^+}\G_+(x,t):=\G_+(x,t_0)\]
exist and are finite for all $x\in \Omega$;
\item\label{eq:derivatives if G wrt t are integrable} $\dfrac{\partial \G}{\partial t}$ and $\nabla_x \G$ are locally integrable in $\Omega\times (a,b)$.
\end{enumerate}
Given a test function $\phi\in C_0^\infty \(\Omega\times (a,b)\)$ the linear functional
\[
\langle \G,\phi\rangle=\int_\Omega \int_a^{t_0} \G_-(x,t)\phi(x,t)\,dxdt
+\int_\Omega \int_{t_0}^b \G_+(x,t)\phi(x,t)\,dxdt
\]
defines a distribution in $\Omega\times (a,b)$.
Define 
\[
[[\G]]=\G_+(x,t_0)-\G_-(x,t_0)\]
to be the jump of the field at $t=t_0$.
\begin{prop}
Under the assumptions \ref{eq:G is C1}, \ref{eq:limits are finite}, and \ref{eq:derivatives if G wrt t are integrable} above on $\G$ we then have the following formula
\begin{equation}\label{eq:formula for the distributional derivative wrt t}
\left\langle \dfrac{\partial \G}{\partial t},\phi \right\rangle
=
\int_\Omega [[ \G(x,t_0)]]\,\phi(x,t_0)\,dx 
+
\int_\Omega \int_a^{t_0} \dfrac{\partial \G_-}{\partial t}(x,t)\,\phi(x,t)\,dx\,dt
+
\int_\Omega \int_{t_0}^b \dfrac{\partial \G_+}{\partial t}(x,t)\,\phi(x,t)\,dx\,dt,
\end{equation}
for all $\phi \in \mathcal D(\Omega\times (a,b))$.
\end{prop}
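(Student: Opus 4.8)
The plan is to argue directly from the definition of the distributional $t$-derivative and to reduce the statement to an ordinary integration by parts in $t$ on each of the two time-slabs $\Omega\times(a,t_0)$ and $\Omega\times(t_0,b)$, where $\G$ coincides with the genuinely $C^1$ fields $\G_-$ and $\G_+$. By definition $\left\langle \partial\G/\partial t,\phi\right\rangle = -\left\langle\G,\partial\phi/\partial t\right\rangle$, and since the functional representing $\G$ splits as a sum of two integrals over the slabs, I would first write
$$\left\langle \dfrac{\partial\G}{\partial t},\phi\right\rangle = -\int_\Omega\left(\int_a^{t_0}\G_-\,\dfrac{\partial\phi}{\partial t}\,dt\right)dx - \int_\Omega\left(\int_{t_0}^b\G_+\,\dfrac{\partial\phi}{\partial t}\,dt\right)dx,$$
where Fubini's theorem has been used to carry out the $t$-integration first; this is legitimate by the local integrability of $\G$ from assumption \ref{eq:G is C1} together with the compact support of $\phi$.

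Next I would integrate by parts in $t$ on each slab. Because $\G_-$ is only $C^1$ on the \emph{open} interval $(a,t_0)$ --- with merely a one-sided continuous extension to $t_0$ supplied by assumption \ref{eq:limits are finite} --- I would integrate by parts on the truncated intervals $(a,t_0-\eta)$ and $(t_0+\eta,b)$ and then let $\eta\to 0^+$. The boundary contributions at $t=a$ and $t=b$ vanish because $\phi$ has compact support in $(a,b)$, while those at the inner endpoints converge, using the one-sided limits of assumption \ref{eq:limits are finite} and the continuity of $\phi$, to $-\G_-(x,t_0)\phi(x,t_0)$ from the lower slab and to $+\G_+(x,t_0)\phi(x,t_0)$ from the upper slab. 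The interior integrals $\int\partial_t\G_\pm\,\phi$ over the truncated intervals converge to the corresponding integrals over $(a,t_0)$ and $(t_0,b)$ by dominated convergence, which is precisely where assumption \ref{eq:derivatives if G wrt t are integrable} is used to furnish an integrable majorant. Collecting the endpoint terms produces $\int_\Omega\big(\G_+(x,t_0)-\G_-(x,t_0)\big)\phi(x,t_0)\,dx=\int_\Omega [[\G(x,t_0)]]\,\phi(x,t_0)\,dx$, and the interior integrals give the remaining two terms, which is exactly \eqref{eq:formula for the distributional derivative wrt t}.

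I expect the only real delicacy to be the passage to the limit $\eta\to 0^+$ carried out jointly with the final $x$-integration, rather than the algebra of the integration by parts itself. Since $\G_\pm$ are assumed $C^1$ only on the open slabs, the jump term cannot be read off from a naive application of the fundamental theorem of calculus on a closed interval; it must instead be extracted through the truncation-and-limit procedure above. This is controlled cleanly by combining Fubini with assumptions \ref{eq:limits are finite} and \ref{eq:derivatives if G wrt t are integrable}: the compact support of $\phi$ confines all integrands to a fixed compact subset of $\Omega\times(a,b)$, on which the local integrability of $\partial_t\G$ provides the majorant needed for dominated convergence in $t$, so that the interchange of limit and $x$-integration is justified.
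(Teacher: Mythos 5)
Your proposal is correct and follows essentially the same route as the paper's proof: split the defining pairing $-\langle \G,\partial\phi/\partial t\rangle$ into the two time-slabs, integrate by parts in $t$ on each, kill the outer boundary terms by compact support, and collect the inner endpoint terms into the jump $[[\G]]$. Your truncation at $t_0\mp\eta$ with dominated convergence is just a more careful rendering of the one-sided limits the paper invokes directly in its boundary evaluations, so the two arguments coincide in substance.
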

\begin{proof}
Let $\phi \in \mathcal D(\Omega\times (a,b))$.
We have 
\begin{align*}
\left\langle \dfrac{\partial \G}{\partial t},\phi \right\rangle
&=
-
\int_\Omega \int_a^{t_0} \G_-(x,t)\dfrac{\partial \phi}{\partial t}(x,t)\,dxdt
-\int_\Omega \int_{t_0}^b \G_+(x,t)\dfrac{\partial \phi}{\partial t}(x,t)\,dxdt\\
&=
-
\int_\Omega \(\int_a^{t_0} \G_-(x,t)\dfrac{\partial \phi}{\partial t}(x,t)\,dt
+
\int_{t_0}^b \G_+(x,t)\dfrac{\partial \phi}{\partial t}(x,t)\,dt\)\,dx
\end{align*}
Integrating by parts with respect to $t$ yields
\[
\int_a^{t_0} \G_-(x,t)\dfrac{\partial \phi}{\partial t}(x,t)\,dt
=
\left. \G_-(x,t) \,\phi(x,t)\right|_{t=a}^{t_0^-}
-
\int_a^{t_0} \dfrac{\partial \G_-(x,t)}{\partial t}\,\phi(x,t)\,dt
\]
and
\[
\int_{t_0}^b \G_+(x,t)\dfrac{\partial \phi}{\partial t}(x,t)\,dt
=
\left. \G_+(x,t) \,\phi(x,t)\right|_{t_0^-}^{t=b}
-
\int_{t_0}^b \dfrac{\partial \G_+(x,t)}{\partial t}\,\phi(x,t)\,dt
\]
for each $x\in \Omega$. Inserting these into the first equation yields the desired formula.

\end{proof}

From assumptions \ref{eq:G is C1}, \ref{eq:limits are finite}, and \ref{eq:derivatives if G wrt t are integrable}, it follows that  
\eqref{form:divergence} takes the form 
\begin{align*}
\left\langle \nabla_x \cdot \G,\phi \right\rangle
&=
-
\sum_{i=1}^3
\int_\Omega \int_a^b G_i(x,t)\,\dfrac{\partial \phi(x,t)}{\partial x_i}\,dxdt\\
&=
-
\sum_{i=1}^3
\(\int_\Omega \int_a^{t_0} G_i(x,t)\,\dfrac{\partial \phi(x,t)}{\partial x_i}\,dxdt
-\int_\Omega \int_{t_0}^b G_i(x,t)\,\dfrac{\partial \phi(x,t)}{\partial x_i}\,dxdt\)\\
&=-
\int_\Omega \int_a^{t_0} \G_-(x,t)\cdot \nabla_x \phi(x,t) \,dxdt
+\int_\Omega \int_{t_0}^b \G_+(x,t)\cdot \nabla_x \phi(x,t)\,dxdt\\
&=-
\int_a^{t_0}\(\int_\Omega  \G_-(x,t)\cdot \nabla_x \phi(x,t) \,dx\)dt
-\int_{t_0}^b\(\int_\Omega  \G_+(x,t)\cdot \nabla_x \phi(x,t)\,dx\)dt.
\end{align*}
Since $\G_\pm$ are differentiable in $x$ for $t\neq t_0$, we have
\begin{align*}
\int_\Omega  \G_\pm(x,t)\cdot \nabla_x \phi(x,t) \,dx
&=
\int_\Omega \(\nabla_x\cdot \(\phi\, \G_\pm\)-\phi\,\nabla_x\cdot \G_\pm\)\,dx\\
&=
\int_{\partial \Omega} \phi(x,t)\, \G_\pm(x,t)\cdot {\bf{n}} (x)\,d\sigma(x)-\int_\Omega \phi(x,t)\,\nabla_x\cdot \G_\pm(x,t)\,dx\\
&=
-\int_\Omega \phi(x,t)\,\nabla_x\cdot \G_\pm(x,t)\,dx
\end{align*}
by the divergence Theorem and since $\phi$ has compact support in $\Omega\times (a,b)$. Above, $d\sigma$ denotes surface measure. If $\Omega$ is unbounded the boundary term above also vanishes since $\phi$ has compact support. Hence we obtain the formula
\[
\left\langle \nabla_x \cdot \G,\phi \right\rangle
=
-\(\int_\Omega \int_a^{t_0} \phi(x,t)\,\nabla_x\cdot \G_-(x,t)\,dx
+\int_\Omega \int_{t_0}^b\phi(x,t)\,\nabla_x\cdot \G_+(x,t)\,dx\)
\]
for all $\phi\in C_0^\infty \(\Omega\times (a,b)\)$.
If $\G$ satisfies $\nabla_x\cdot \G=0$ in the sense of distributions, then  $\left\langle \nabla_x \cdot \G,\phi \right\rangle=0$, and in particular  
\[
\int_\Omega \int_a^{t_0} \phi(x,t)\,\nabla_x\cdot \G_-(x,t)\,dx=0\]
for all $\phi\in C_0^\infty \(\Omega\times (a,t_0)\)$ and so $\nabla_x\cdot \G_-(x,t)=0$ for all $x\in \Omega$ and $a<t<t_0$.
Similarly, we get $\nabla_x\cdot \G_+(x,t)=0$ for all $x\in \Omega$ and $t_0<t<b$.

\setcounter{equation}{0}
\section{Boundary conditions}\label{sec:boundary conditions}

In this section, we assume the material parameters $\varepsilon(x,t), \mu(x,t)$ are defined for $x\in \Omega$ and $t\neq t_0$ as follows:
\begin{equation*}
\e(x,t)
=
\begin{cases}
\e_-(x,t) & \text{for $t<t_0$}\\
\e_+(x,t) & \text{for $t>t_0$}
\end{cases},
\qquad
\mu(x,t)
=
\begin{cases}
\mu_-(x,t) & \text{for $t<t_0$}\\
\mu_+(x,t) & \text{for $t>t_0$}
\end{cases};
\end{equation*}
and satisfy 
\begin{enumerate}
  \item[\hypertarget{H1}{H1}] the limits 
  \begin{align*}
  \lim_{t\to t_0^-} \e_-(x,t)&:=\e_-(x,t_0),\quad \lim_{t\to t_0^+} \e_+(x,t):=\e_+(x,t_0),\\ \quad \lim_{t\to t_0^-} \mu_-(x,t)&:=\mu_-(x,t_0),\quad \lim_{t\to t_0^+} \mu_+(x,t):=\mu_+(x,t_0)  
  \end{align*}
  all exist and are finite.
  
  \item[\hypertarget{H2}{H2}] $\varepsilon$ and $\mu$ are differentiable in $t$ for $t\neq t_0$ with bounded derivatives.
  
  \item[\hypertarget{H3}{H3}] $\varepsilon$ and $\mu$ are differentiable in $x$ for all $t\neq t_0$ with derivatives in $x$ bounded in $\Omega$.
\end{enumerate}
%


\subsection{Boundary condition for the electric field}\label{subsec:boundary condition for the electric field}
Suppose $\e=\e(x,t)$ satisfies  \hyperlink{H1}{H1}, \hyperlink{H2}{H2}, and \hyperlink{H3}{H3}
 and we interpret the Maxwell equation resulting from \eqref{Ampere Maxwell} and \eqref{eq:constitutive}
\begin{equation}\label{eq:maxwell equation Faraday}
\nabla \times \H=\dfrac{1}{c} \dfrac{\partial }{\partial t}\(\e \E\)
\end{equation}
in the sense of distributions, i.e., if $\phi=\phi(x,t)$ is a smooth function with compact support in $\Omega\times (a,b)$, then
\[
\langle \nabla \times \H, \phi \rangle = \dfrac{1}{c} \left\langle \dfrac{\partial }{\partial t}\(\e \E\),\phi \right\rangle.
\]
From \hyperlink{H1}{H1} 
we set 
\[
\lim_{t\to t_0^-} \e(x,t)=\e_-(x,t_0),\qquad \lim_{t\to t_0^+} \e(x,t)=\e_+(x,t_0).
\]
We assume the field $\E$ has the form
\begin{equation*}
\E=
\begin{cases}
\E_- & \text{for $t<t_0$}\\
\E_+ & \text{for $t>t_0$}
\end{cases}
\end{equation*}
and satisfies conditions \ref{eq:G is C1}, \ref{eq:limits are finite}, and \ref{eq:derivatives if G wrt t are integrable} from Section \ref{sec:general_formulas}; we set
\[
\lim_{t\to t_0^-} \E(x,t)=\E_-(x,t_0),\qquad \lim_{t\to t_0^+} \E(x,t)=\E_+(x,t_0).
\]
Next let 
\[
\G
=
\begin{cases}
\G_-(x,t)=\e_-(x,t)\E_-(x,t) & \text{for $x\in \Omega$ and $a<t<t_0$}\\
\G_+(x,t)=\e_+(x,t)\E_+(x,t) & \text{for $x\in \Omega$ and $t_0<t<b$}.
\end{cases}
\]
Since $\e$ satisfies \hyperlink{H1}{H1}, \hyperlink{H2}{H2}, and \hyperlink{H3}{H3}, it follows that 
$\G=\e\E$ satisfies \ref{eq:G is C1}, \ref{eq:limits are finite}, and \ref{eq:derivatives if G wrt t are integrable} and so we can apply \eqref{eq:formula for the distributional derivative wrt t} to obtain
\begin{align}\label{eq:formula for dt of epsilon E}
\left\langle \dfrac{\partial }{\partial t}\(\e \E\),\phi \right\rangle
&=
\int_\Omega [[ \e (x,t_0)\E(x,t_0)]]\,\phi(x,t_0)\,dx \\
&\qquad +
\int_\Omega \int_a^{t_0} \dfrac{\partial \(\e_-(x,t)\, \E(x,t)\)}{\partial t}\,\phi(x,t)\,dt\,dx
+
\int_\Omega \int_{t_0}^b \dfrac{\partial \(\e_+(x,t)\, \E(x,t)\)}{\partial t}\,\phi(x,t)\,dt\,dx,\notag
\end{align}
with
\[
[[ \e (x,t_0)\E(x,t_0)]]
=
 \e_+(x,t_0)\, \E_+(x,t_0)- \e_-(x,t_0)\, \E_-(x,t_0).
\]

Assuming also that $\H$ satisfies conditions \ref{eq:G is C1}, \ref{eq:limits are finite}, and \ref{eq:derivatives if G wrt t are integrable}, we will show that the Maxwell equation \eqref{eq:maxwell equation Faraday} is satisfied pointwise, that is,
\[
\dfrac{1}{c}\dfrac{\partial \(\e(x,t)\, \E(x,t)\)}{\partial t}=\nabla \times\H(x,t)
\]
for all $x$ and $t\neq t_0$.
Let us prove this claim. 
Let $\phi\in C_0^\infty\(\Omega\times (a,t_0)\)$. From assumptions \ref{eq:G is C1},  and \ref{eq:derivatives if G wrt t are integrable}, $\H(x,t)$ is differentiable both in $x$ and $t$ for $t\neq t_0$, and it follows that 
\begin{align*}
\left\langle \nabla \times\H,\phi \right\rangle
&=
\int_\Omega \int_a^b \H(x,t)\times \nabla\phi(x,t)\,dxdt
=
\int_\Omega \int_a^{t_0} \H(x,t)\times \nabla\phi(x,t)\,dxdt\\
&=
\int_\Omega \int_a^{t_0} \phi(x,t)\,\nabla \times \H(x,t)\,dxdt\\
&= \dfrac{1}{c}\,\int_\Omega \int_a^{t_0} \dfrac{\partial \(\e_-(x,t)\, \E(x,t)\)}{\partial t}\,\phi(x,t)\,dt\,dx,\quad \text{from \eqref{eq:formula for dt of epsilon E} since $\phi$ has compact support in $\Omega\times (a,t_0)$}
\end{align*}
hence 
\[
\nabla \times \H(x,t)=\dfrac{1}{c}\dfrac{\partial \(\e_-(x,t)\, \E(x,t)\)}{\partial t}\quad \text{for all $x\in \Omega$ and $a<t<t_0$}.\]
Similarly, taking $\phi \in C_0^\infty\(\Omega\times (t_0,b)\)$, we get the same point-wise identity with $\e_-$ replaced by $\e_+$ for $x\in \Omega$ and $t_0<t<b$.

Therefore, from the pointwise Maxwell equation \eqref{eq:maxwell equation Faraday} we get
\[
\left\langle \dfrac{\partial }{\partial t}\(\e \E\),\phi \right\rangle
=
c \langle \nabla \times\H,\phi\rangle
=
c \int_\Omega \int_a^b \nabla \times\H(x,t) \phi(x,t)dxdt
=
  \int_\Omega \int_a^b \dfrac{\partial \(\e(x,t)\, \E(x,t)\)}{\partial t} dx dt,
  \]
  and so from \eqref{eq:formula for dt of epsilon E} we get 
  \[
  \int_\Omega [[ \e (x,t_0)\E(x,t_0)]]\,\phi(x,t_0)\,dx=0
  \]
  for all $\phi$. As a result, we obtain the boundary condition for the electric field:
\begin{equation}\label{eq:jump of epsilon E is zero}
[[ \e (x,t_0)\E(x,t_0)]]=0
\end{equation}
for all $x$.

 \subsection{Boundary condition for the magnetic field} 
We next consider the Maxwell equation resulting from \eqref{Faraday law} and \eqref{eq:constitutive}
\begin{equation}\label{eq:maxwell equation Ampere}
\nabla \times \E=-\dfrac{1}{c} \dfrac{\partial }{\partial t}\(\mu \H\).
\end{equation}  
in the sense of distributions, i.e., if $\phi=\phi(x,t)\in C_0^\infty\(\Omega\times (a,b)\)$, then
\[
\langle \nabla \times \E, \phi \rangle = -\dfrac{1}{c} \left\langle \dfrac{\partial }{\partial t}\(\mu \H\),\phi \right\rangle.
\]
We will proceed in the same way as in the previous section to get a boundary condition for $\H$.
Since $\mu$ satisfies \hyperlink{H1}{H1}, \hyperlink{H2}{H2}, and \hyperlink{H3}{H3}
 we set  
\[
\lim_{t\to t_0^-} \mu(x,t)=\mu_-(x,t_0),\qquad \lim_{t\to t_0^+} \mu(x,t)=\mu_+(x,t_0).
\]
Assuming also that $\H$ satisfies \ref{eq:G is C1}, \ref{eq:limits are finite}, and \ref{eq:derivatives if G wrt t are integrable} from Section \ref{sec:general_formulas},
we set
\[
\lim_{t\to t_0^-} \H(x,t)=\H_-(x,t_0),\qquad \lim_{t\to t_0^+} \H(x,t)=\H_+(x,t_0).
\]
If we let 
\[
\G
=
\begin{cases}
\G_-(x,t)=\mu_-(x,t)\H(x,t) & \text{for $x\in \Omega$ and $a<t<t_0$}\\
\G_+(x,t)=\mu_+(x,t)\H(x,t) & \text{for $x\in \Omega$ and $t_0<t<b$},
\end{cases}
\]
then $\G=\mu\H$ satisfies \ref{eq:G is C1}, \ref{eq:limits are finite}, and \ref{eq:derivatives if G wrt t are integrable} and so
applying \eqref{eq:formula for the distributional derivative wrt t} yields
\begin{equation}\label{eq:formula for dt of epsilon H}
\left\langle \dfrac{\partial }{\partial t}\(\mu \H\),\phi \right\rangle
=
\int_\Omega [[ \mu (x,t_0)\H(x,t_0)]]\,\phi(x,t_0)\,dx 
+
\int_\Omega \int_a^b \dfrac{\partial \(\mu(x,t)\, \H(x,t)\)}{\partial t}\,\phi(x,t)\,dt\,dx,
\end{equation}
where
\[
[[ \mu (x,t_0)\H(x,t_0)]]
=
 \mu_+(x,t_0)\, \H_+(x,t_0)- \mu_-(x,t_0)\, \H_-(x,t_0),
\]

On the other hand, 
assuming $\E$ satisfies \ref{eq:G is C1}, \ref{eq:limits are finite}, and \ref{eq:derivatives if G wrt t are integrable},
%
proceeding as in Section \ref{subsec:boundary condition for the electric field}, it follows that the Maxwell equation \eqref{eq:maxwell equation Ampere} is satisfied pointwise, that is,
\[
-\dfrac{1}{c}\dfrac{\partial \(\mu(x,t)\, \H(x,t)\)}{\partial t}=\nabla \times\E(x,t)
\]
for all $x$ and $t\neq t_0$.
Therefore, we obtain
\[
\left\langle \dfrac{\partial }{\partial t}\(\mu \H\),\phi \right\rangle
=
-c \langle \nabla \times\E,\phi\rangle
=
-c \int_\Omega \int_a^b \nabla \times\E(x,t) \phi(x,t)dxdt
=
  \int_\Omega \int_a^b \dfrac{\partial \(\mu(x,t)\, \H(x,t)\)}{\partial t} dx dt,
  \]
  and so from \eqref{eq:formula for dt of epsilon H} we get 
  \[
  \int_\Omega [[ \mu (x,t_0)\H(x,t_0)]]\,\phi(x,t_0)\,dx=0
  \]
  for all $\phi$. This yields the boundary condition for the magnetic field
   \begin{equation}\label{eq:jump of mu H is zero}
  [[ \mu (x,t_0)\H(x,t_0)]]=0.
  \end{equation}


\setcounter{equation}{0}
\section{Snell's law for time varying media}\label{sec:SnellLaw}
In this section, we assume that the material parameters $\varepsilon$ and $\mu$ vary in time but not in space. Suppose that time $t=t_0$ corresponds to a temporal interface, and that $\e_{\pm}, \mu_{\pm}$ are as in the previous section, now independent of space. Define the velocities
\[
v_-(t)=\dfrac{1}{\sqrt{\e_-(t)\mu_-(t)}}, \qquad t<t_0,
\]
and
\[
v_+(t)=\dfrac{1}{\sqrt{\e_+(t)\mu_+(t)}}, \qquad t>t_0.
\]
We further assume that $v_{\pm}'(t)$ are integrable in time so that condition \eqref{eq:derivatives if G wrt t are integrable} is verified. This means that we need to assume that the quantities
\[\dfrac{ \frac{d}{dt} (\e_{-} \mu_{-})}{(\e_{-} \mu_{-})^{3/2}}, \qquad \dfrac{ \frac{d}{dt} (\e_{+} \mu_{+})}{(\e_{+} \mu_{+})^{3/2}}\]
are integrable in time. 
Notice that if $\e_\pm,\mu_\pm$ are bounded below by a positive constant, then the integrability follows from \hyperlink{H2}{H2}.
Clearly this condition is trivially satisfied if $\varepsilon_{\pm}$ and $\mu_{\pm}$ are positive constants in time.



Let us make the ansatz for the incident field
\[\E_i(x,t)= A_i e^{i\omega_1 \left(\frac{k_i \cdot x}{v_-(t)}-t\right)}, \qquad \text{for $t<t_0$,}\]
where the amplitude $A_i$ is 3-d non-zero vector with complex components, $\o_1>0$, and $k_i$ is a unit vector.
Suppose that the transmitted wave has a part with direction $k_t$ and a part with direction $k_r$, given as follows.
For the transmitted field, we make the ansatz:
\[\E_t(x,t) = A_t e^{i\omega_3 \left(\frac{k_t \cdot x}{v_+(t)} - t\right)}, \qquad \text{for $t > t_0$};
\]
and for the reflected field:
\[\E_r(x,t) = A_re^{i\omega_2 \left(\frac{k_r \cdot x}{v_+(t)} - t\right)}, \qquad \text{for $t > t_0$};
\]
where, once again, $A_r, A_t$ are non-zero complex vectors and $k_t, k_r$ are unit vectors. 

Here $\o_2$ and $\o_3$ are real numbers different from zero and we will show that their signs will determine the directions of the wave vectors. 
If $\E_-=\E_i$ and $\E_+=\E_t+\E_r$, then the jump of these fields at $t=t_0$ is given by 
\[
[[\E(x,t_0)]]=\lim_{t\to t_0^+}\(\E_t(x,t)+\E_r(x,t)\)-\lim_{t\to t_0^-}\E_i(x,t).
\]
\begin{remark}
Compare this idea with the case of a spatial interface. In such a situation, since the incident wave and reflected wave are on the same side of the interface, one instead takes $\E_-=\E_i+\E_r$ and $\E_+=\E_t$. See for instance \cite{gutierrez2013refraction}.
\end{remark}
Now, if the field
\[
\E=
\begin{cases}
\E_+ & \text{for $t>t_0$}\\
\E_- & \text{for $t<t_0$}
\end{cases}
\]
satisfies the Maxwell equation \eqref{eq:maxwell equation Faraday} in the sense of distributions and the form of the fields $\E_i,\E_r$, and $\E_t$ implies that they satisfy \eqref{eq:G is C1}, \eqref{eq:limits are finite}, and \eqref{eq:derivatives if G wrt t are integrable}, it follows that the boundary condition \eqref{eq:jump of epsilon E is zero} is applicable, and we get 
\begin{align*}
0&=[[\epsilon(t_0)\E(x,t_0)]]
=
\e_+(t_0)\E_+(x,t_0)-\e_-(t_0)\E_-(x,t_0)\\
&=
\e_+(t_0)A_t e^{i\omega_3 \left(\frac{k_t\cdot x}{v_+(t_0)}-t_0\right)}
+
\e_+(t_0)A_re^{i\omega_2 \left(\frac{k_r\cdot x}{v_+(t_0)}-t_0\right)}
-
\e_-(t_0)A_i e^{i\omega_1 \left(\frac{k_i \cdot x}{v_-(t_0)}-t_0\right)}
\end{align*}
for all $x=(x_1,x_2,x_3)$.

If we define
\[
m_i = \omega_1 k_i/ v_-(t_0), \qquad m_r = \omega_2 k_r/v_+(t_0), \qquad m_t=\omega_3 k_t/v_+(t_0),
\]
with $v_-(t_0)=\lim_{t\to t_0^-}v_-(t)$ and $v_+(t_0)=\lim_{t\to t_0^+}v_+(t)$,
then we get an equation of the form
\begin{align}\label{exponential_equation}
B_t e^{im_t\cdot x}+B_r e^{i m_r\cdot x}-B_i e^{i m_i \cdot x}=0\quad \text{for all $x=(x_1,x_2,x_3)$,}
\end{align}
where
\[B_t = \e_+(t_0)A_t e^{-i\omega_3 t_0}, \quad B_i = \e_-(t_0)A_i e^{-i\omega_1 t_0}, \quad B_r = \e_+(t_0)A_re^{-i \omega_2 t_0}.\]
Now, from Lemma \ref{lm:exponentials}, equation \eqref{exponential_equation} implies, that $m_i = m_t=m_r$, i.e.,
\begin{equation}\label{eq:exponents are all equal}
\dfrac{\omega_1 k_i}{v_-(t_0)}=\dfrac{\omega_2 k_r}{v_+(t_0)}=\dfrac{\omega_3 k_t}{v_+(t_0)}.
\end{equation}
This gives the following proposition showing general relationships between the wave vectors $k_r$, $k_t$, and $k_i$.
\begin{prop}\label{prop:snell law}
Under the previous assumptions, we have
\begin{align}\label{kt_kr_ki_relation}
\begin{split}
&k_t  =\dfrac{\omega_1}{\omega_3}\dfrac{v_+(t_0)}{v_-(t_0)} k_i,\\
&k_r =\dfrac{\omega_1}{\omega_2} \dfrac{v_+(t_0)}{v_-(t_0)}k_i
\end{split}.
\end{align}
\end{prop}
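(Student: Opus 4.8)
The plan is to read off the two identities directly from equation \eqref{eq:exponents are all equal}, which is the real engine of the argument. That equation, asserting
\[
\frac{\omega_1 k_i}{v_-(t_0)}=\frac{\omega_2 k_r}{v_+(t_0)}=\frac{\omega_3 k_t}{v_+(t_0)},
\]
has already been obtained by evaluating the boundary condition \eqref{eq:jump of epsilon E is zero} on the plane-wave ansatz for $\E_i,\E_r,\E_t$ and invoking Lemma \ref{lm:exponentials} to match the spatial frequencies $m_i,m_r,m_t$. Once this vector equation is in hand, the proposition is purely algebraic.

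First I would isolate $k_t$. Equating the first and third members of \eqref{eq:exponents are all equal} and solving the resulting vector equation for $k_t$ gives
\[
k_t=\frac{\omega_1}{\omega_3}\,\frac{v_+(t_0)}{v_-(t_0)}\,k_i,
\]
since $\omega_3\neq 0$ and $v_\pm(t_0)>0$. Next, equating the first and second members and solving for $k_r$ yields
\[
k_r=\frac{\omega_1}{\omega_2}\,\frac{v_+(t_0)}{v_-(t_0)}\,k_i,
\]
again using $\omega_2\neq 0$. These are exactly \eqref{kt_kr_ki_relation}.

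The one point that deserves a remark, rather than a genuine obstacle, is consistency with the normalization: $k_i,k_r,k_t$ are unit vectors. Taking Euclidean norms in the two displays above forces $|\omega_3|=\omega_1\,v_+(t_0)/v_-(t_0)$ and $|\omega_2|=\omega_1\,v_+(t_0)/v_-(t_0)$, which is the statement that the transmitted and reflected waves keep the spatial wavelength of the incident wave while their temporal frequencies rescale by the velocity ratio. The scalar prefactor in each formula is then $\pm 1$, and since $\omega_1>0$ and $v_\pm(t_0)>0$ its sign equals the sign of $\omega_3$ (respectively $\omega_2$); a positive sign gives a wave co-propagating with $k_i$ (forward in time) and a negative sign one counter-propagating (backward in time), matching the physical picture described in the introduction.

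Thus the genuine mathematical content sits upstream, in the distributional boundary condition and the exponential-matching Lemma \ref{lm:exponentials}; granting those, Proposition \ref{prop:snell law} follows by solving a pair of scalar-multiple vector equations, and the only care needed is to track signs against the unit-vector constraint.
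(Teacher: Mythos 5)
Your proposal is correct and matches the paper's own treatment: the paper states Proposition \ref{prop:snell law} as an immediate algebraic consequence of \eqref{eq:exponents are all equal} (itself obtained from the boundary condition \eqref{eq:jump of epsilon E is zero} and Lemma \ref{lm:exponentials}), exactly as you do, solving the vector equation for $k_t$ and $k_r$ using $\omega_2,\omega_3\neq 0$ and $v_\pm(t_0)>0$. Your closing remark on the unit-vector normalization and the resulting sign dichotomy reproduces the discussion the paper gives right after the proposition, so nothing is missing.
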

Since the wave vectors are all unit, taking absolute values in \eqref{kt_kr_ki_relation} yields 
\[
\left| \dfrac{\omega_1}{\omega_3}\dfrac{v_+(t_0)}{v_-(t_0)}\right|=\left|\dfrac{\omega_1}{\omega_2} \dfrac{v_+(t_0)}{v_-(t_0)}\right|=1
\]
which implies 
\[
\dfrac{\omega_1}{\omega_3}=\pm \dfrac{v_-(t_0)}{v_+(t_0)},\quad \dfrac{\omega_1}{\omega_2}=\pm \dfrac{v_-(t_0)}{v_+(t_0)}.
\]
We have assumed at the outset that $\o_1>0$. So, if $\o_3>0$, then $k_t=k_i$, which is consistent with \cite[Equation (5)]{galiffi2022photonics}. Moreover, the fact that $\omega_3=\dfrac{v_+(t_0)}{v_-(t_0)} \omega_1$ is precisely the Snell's Law for the transmitted wave in \cite[Equations (13)-(14)]{mendoncca2002time}. 
On the other hand, if $\o_3<0$ we then get $k_t=-k_i$.
A similar analysis with $\o_2$ yields that $k_r=k_i$ if $\o_2>0$, and $k_r=-k_i$ if $\o_2<0$. This agrees with  \cite[Equation (4)]{mostafa2024temporal} and \cite[Equation (13)]{mendoncca2002time}.


\begin{remark}
As mentioned above, the equations (\ref{kt_kr_ki_relation}) can be seen as a generalized Snell's Law at a temporal interface which does not require that the velocities on either side of the  interface be constant. Indeed, this can be written as
\begin{align}\label{general_Snell_transmitted}
\omega_3 n_2(t_0) k_t =\omega_1 n_1(t_0)k_i
\end{align}
and
\begin{align}\label{general_Snell_reflected}
\omega_2 n_2(t_0) k_r =\omega_1 n_1(t_0)k_i
\end{align}
In the plane $(x, ct)$, one can define an angle $\alpha$ similar to the incidence angle $\theta$ in the $(x,y)$ plane with spatial interfaces, such that $\tan(\alpha_j) = \dfrac{1}{n_j}$ for $j=1,2,3$. This angle can be called the angle of temporal incidence \cite{mendoncca2000theory}. Then, from (\ref{general_Snell_transmitted}) and (\ref{general_Snell_reflected}) we obtain the scalar law
\[| \omega_2|\tan(\alpha_1) = \omega_1 \tan(\alpha_2) \]
which agrees with the scalar law in equation (15) in \cite{mendoncca2002time}. Since $\tan(x)$ can take on any real value, we see that there is no notion of total internal reflection as in the case of classical spatial interfaces. That is, there is no wave that propagates backwards in time. 
Finally, the material parameters $\varepsilon$ and $\mu$  should be differentiable in time away from the interface $t=t_0$ and could in principle also vary in space. However, with spatially varying velocities, it is not clear if Lemma \ref{lm:exponentials} applies, so we have not considered this case. 
\end{remark}

\subsection{Case when $\e$ and $\mu$ are jump functions and calculation of the amplitudes}\label{subsec:epsilon mu jump constants}

Now consider the case when 
\[
\e(t)=
\begin{cases}
\e_- &\text{for $a<t<t_0$}\\
\e_+ &\text{for $t_0<t<b$}
\end{cases},
\qquad 
\mu(t)=
\begin{cases}
\mu_- &\text{for $a<t<t_0$}\\
\mu_+ &\text{for $t_0<t<b$}
\end{cases}
\]
with $\mu_{\pm},\e_{\pm}$ positive constants.
So $v_-(t)=v_-$ for $a<t<t_0$ and $v_+(t)=v_+$ for $t_0<t<b$ are both constant. This is a common application in practice, see e.g. \cite{galiffi2022photonics} and the references therein.
This form of velocities also enables us to compute the associated magnetic fields and establish the boundary conditions for them. Together with the boundary conditions already obtained for the electric fields, we will be able to calculate the amplitudes of the transmitted and reflected waves.

In fact, let us calculate first the magnetic fields. Consider the following Maxwell equation \eqref{Faraday law} for the incident field when $t<t_0$: seek $\mathbf{H}_i$ satisfying
\[\nabla \times \E_i = -\dfrac{\mu_-}{c} \dfrac{\partial \mathbf{H}_i}{\partial t}.\]
Since
\[\E_i(x,t)= A_i e^{i\omega_1 \left(\frac{k_i \cdot x}{v_-}-t\right)}\]
we see that
\[\nabla \times \E_i =-i\omega_1 \left( A_i \times \dfrac{k_i}{v_-}\right) e^{i\omega_1 \left( \frac{k_i\cdot x}{v_-}-t\right)}\]
and so integrating in time yields
\begin{align}\label{Hi_calculated}
\mathbf{H}_i = -\dfrac{c}{\mu_-}\int \nabla \times \E_i \,dt=-\dfrac{c}{\mu_-}\E_i \times \dfrac{k_i}{v_-}
\end{align}
plus a field depending only on $x$ which is assumed to be zero.
Now suppose $t>t_0$. 
Since
\[\E_r(x,t)=A_re^{i\omega_2 \left(\frac{k_r\cdot x}{v_+}-t\right)}\]
we similarly find that
\begin{align}\label{Hr_calculated}
\mathbf{H}_r = -\dfrac{c}{\mu_+} \E_r \times \dfrac{k_r}{v_+}
\end{align}
plus a field depending only on $x$ which is also assumed to be zero.
Finally, since
\[\E_t(x,t) = A_t e^{i\omega_3 \left(\frac{k_t\cdot x}{v_+}-t\right)}\]
we find that
\begin{align}\label{Ht_calculated}
\mathbf{H}_t = -\dfrac{c}{\mu_+} \E_t \times \dfrac{k_t}{v_+}.
\end{align}
Now we may consider the magnetic boundary condition (\ref{eq:jump of mu H is zero}). The jump of $\mu \mathbf{H}$ by definition is given by 
\begin{align}\label{muH-written-out}
\begin{split}
[[\mu(x,t_0) \mathbf{H}(x,t_0)]] &=\mu_+\mathbf{H}_+(t_0)-\mu_-\mathbf{H}_-(t_0)\\
&=\mu_+\left( -\dfrac{c}{\mu_+} \E_t \times \dfrac{k_t}{v_+} -\dfrac{c}{\mu_+} \E_r \times \dfrac{k_r}{v_+}\right)-\mu_-\left( -\dfrac{c}{\mu_-}\E_i \times \dfrac{k_i}{v_-}\right)\\ 
&=-c \left(A_t\times \dfrac{k_t}{v_+}e^{i\omega_3 \left(\frac{k_t\cdot x}{v_+}-t\right)} +A_r \times \dfrac{k_r}{v_+}e^{i\omega_2 \left(\frac{k_r\cdot x}{v_+}-t\right)}\right)
+c\left( A_i \times \dfrac{k_i}{v_-}e^{i\omega_1 \left(\frac{k_i \cdot x}{v_-}-t\right)} \right), \quad \forall \; x
\end{split}
\end{align}
and at $t=t_0$. Here $\H_+=\H_r+\H_t$.
The boundary condition (\ref{eq:jump of mu H is zero}) then implies that
\[-c \left(A_t\times \dfrac{k_t}{v_+}e^{i\omega_3 \left(\frac{k_t\cdot x}{v_+}-t_0\right)} +A_r \times \dfrac{k_r}{v_+}e^{i\omega_2 \left(\frac{k_r\cdot x}{v_+}-t_0\right)}\right)
+c\left( A_i \times \dfrac{k_i}{v_-}e^{i\omega_1 \left(\frac{k_i \cdot x}{v_-}-t_0\right)} \right)=0\]
for all $x$.

But since we know from \eqref{eq:exponents are all equal} that the exponentials in $x$ are all equal
we obtain
\begin{align}\label{eqn_to_solve_magnetic_BC}
-A_te^{-i\omega_3 t_0} \times \dfrac{k_t}{v_+} -A_re^{-i\omega_2 t_0}\times\dfrac{k_r}{v_+}+A_ie^{-i\omega_1 t_0}\times \dfrac{k_i}{v_-} =0, 
\end{align}
that is,
\begin{align*}
-A_te^{-i\omega_3 t_0} \times k_i \,\dfrac{\omega_1}{v_-\omega_3}-A_re^{-i\omega_2 t_0}\times k_i\, \dfrac{\omega_1}{v_-\omega_2} +A_ie^{-i\omega_1 t_0}\times \dfrac{k_i}{v_-} =0, 
\end{align*}
cancelling $v_-$ yields
\begin{align*}
-A_te^{-i\omega_3 t_0} \times k_i \,\dfrac{\omega_1}{\omega_3}  -A_re^{-i\omega_2 t_0}\times k_i\, \dfrac{\omega_1}{\omega_2}+A_ie^{-i\omega_1 t_0}\times k_i
=0, 
\end{align*}
so
\begin{align*}
\(-\dfrac{\omega_1}{\omega_3} A_te^{-i\omega_3 t_0}-\dfrac{\omega_1}{\omega_2} A_re^{-i\omega_2 t_0} +A_ie^{-i\omega_1 t_0}
 \)\times k_i =0. 
\end{align*}
From \eqref{eq:exponents are all equal} and letting $x=0$ in \eqref{exponential_equation} we get
\[\e_+A_t e^{-i\omega_3 t_0} 
+\e_+A_re^{-i \omega_2 t_0}
-\e_-A_i e^{-i\omega_1 t_0}
=0
.\]


If we set 
\[
B_i=A_ie^{-i\omega_1 t_0},\quad B_r=A_re^{-i\omega_2 t_0},\quad B_t=A_te^{-i\omega_3 t_0},\quad k_i=(k_1^i,k_2^i,k_3^i)\]
\[
B_i=(B_1^i,B_2^i,B_3^i),\quad B_r=(B_1^r,B_2^r,B_3^r),\quad B_t=(B_1^t,B_2^t,B_3^t),
\]
then we need to solve in $B_r$ and $B_t$ the system of equations
\begin{align}
\e_+ \,B_t 
+\e_+ \, B_r
-\e_-\,B_i&=0\label{eq:equation for B in epsilon}\\
\(-\frac{\omega_1}{\omega_3} \,B_t -\frac{\omega_1}{\omega_2}\, B_r+B_i
 \)\times k_i &=0\label{eq:equation for B cross ki}
\end{align}
which can be written
\begin{align*}
 B_r+ B_t&=(\e_-/\e_+)\,B_i\\
\(\frac{\omega_1}{\omega_2}\, B_r+\frac{\omega_1}{\omega_3} \,B_t\)\times k_i
  &=B_i\times k_i.
\end{align*}
We now proceed to find $B_r$ and $B_t$ from this system of equations. 
We shall prove the following.
\begin{prop}\label{prop:formulas for the amplitudes}
If $\o_2\neq \o_3$, then 
\begin{equation}\label{eq:amplitude Br in terms of Bi}
B_r=\dfrac{1-(\omega_1/\omega_3) (\e_-/\e_+)}{(\omega_1/\omega_2)-(\omega_1/\omega_3)} B_i,
\end{equation}
and 
\begin{equation}\label{eq:amplitude Bt in terms of Bi}
B_t=
\dfrac{(\omega_1/\omega_2)(\e_-/\e_+)-1}{(\omega_1/\omega_2)-(\omega_1/\omega_3)}B_i.
\end{equation}
If $\o_2=\o_3$, then 
\[\(A_t+A_r\)e^{-i\o_2 t_0}=(\e_-/\e_+)B_i.\]
\end{prop}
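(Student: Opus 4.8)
The plan is to reduce the coupled system \eqref{eq:equation for B in epsilon}--\eqref{eq:equation for B cross ki} to a single $2\times2$ linear system with scalar coefficients and then solve it by elimination. Written out, the two governing relations are
\begin{align*}
B_r+B_t&=(\e_-/\e_+)\,B_i,\\
\left(\frac{\omega_1}{\omega_2}B_r+\frac{\omega_1}{\omega_3}B_t\right)\times k_i&=B_i\times k_i.
\end{align*}
The first is a genuine vector identity, but the second constrains only the components orthogonal to $k_i$, since $v\times k_i$ annihilates the part of $v$ parallel to $k_i$. The crucial preliminary step is therefore to establish that all three amplitudes $B_i,B_r,B_t$ lie in the plane $k_i^{\perp}$. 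This transversality is exactly the content of the divergence equations \eqref{divergence E zero} and \eqref{divergence B zero} treated in Section \ref{sec:divergence}: with $\rho=0$ the condition $\nabla\cdot(\e\E)=0$ forces $A_i,A_r,A_t$, and hence $B_i,B_r,B_t$, to be orthogonal to $k_i$.

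Granting transversality, the map $v\mapsto v\times k_i$ is injective on $k_i^{\perp}$, so the cross-product relation is equivalent to the direct vector identity
\[
\frac{\omega_1}{\omega_2}B_r+\frac{\omega_1}{\omega_3}B_t=B_i.
\]
Together with $B_r+B_t=(\e_-/\e_+)\,B_i$ this is a $2\times2$ system whose coefficient matrix has determinant $\frac{\omega_1}{\omega_3}-\frac{\omega_1}{\omega_2}$. When $\omega_2\neq\omega_3$ this determinant is nonzero; substituting $B_t=(\e_-/\e_+)B_i-B_r$ into the direct identity and collecting the $B_r$ terms yields $\left(\frac{\omega_1}{\omega_2}-\frac{\omega_1}{\omega_3}\right)B_r=\left(1-\frac{\omega_1}{\omega_3}\frac{\e_-}{\e_+}\right)B_i$, which is \eqref{eq:amplitude Br in terms of Bi}; then \eqref{eq:amplitude Bt in terms of Bi} follows from $B_t=(\e_-/\e_+)B_i-B_r$. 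Since $B_i$ is a fixed vector, each Cartesian component satisfies the same scalar system, so the vector formulas hold componentwise.

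When $\omega_2=\omega_3$ the two coefficients $\omega_1/\omega_2$ and $\omega_1/\omega_3$ coincide and the determinant vanishes, so the direct identity collapses to $\frac{\omega_1}{\omega_2}(B_r+B_t)=B_i$. Combined with the first relation this is consistent exactly when $(\omega_1/\omega_2)(\e_-/\e_+)=1$, and in that degenerate case the individual amplitudes are no longer separately determined---only their sum is fixed by $B_r+B_t=(\e_-/\e_+)B_i$. Because $\omega_2=\omega_3$ gives $B_r+B_t=A_re^{-i\omega_2 t_0}+A_te^{-i\omega_3 t_0}=(A_t+A_r)e^{-i\omega_2 t_0}$, this last relation is precisely the claimed identity $(A_t+A_r)e^{-i\omega_2 t_0}=(\e_-/\e_+)B_i$.

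The main obstacle is the transversality reduction in the first paragraph: without knowing that $B_i,B_r,B_t\perp k_i$, the cross-product equation determines $\frac{\omega_1}{\omega_2}B_r+\frac{\omega_1}{\omega_3}B_t$ only up to an undetermined scalar multiple of $k_i$, so the clean closed forms \eqref{eq:amplitude Br in terms of Bi}--\eqref{eq:amplitude Bt in terms of Bi} would fail along the $k_i$-direction. Once transversality is in hand, everything that remains is routine $2\times2$ linear algebra, with the single genuine case split governed by whether $\omega_2=\omega_3$.
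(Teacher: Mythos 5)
Your proof is correct and follows essentially the same route as the paper's: both eliminate $B_t$ via $B_r+B_t=(\e_-/\e_+)B_i$, resolve the kernel of the map $v\mapsto v\times k_i$ by means of the transversality $B_i,B_r,B_t\perp k_i$ supplied by the divergence conditions together with the Snell relations $k_r,k_t\parallel k_i$, and split into the cases $\omega_2\neq\omega_3$ and $\omega_2=\omega_3$, arriving at the same consistency condition $\e_-\omega_1=\e_+\omega_2$ in the degenerate case. The only cosmetic difference is that you invoke injectivity of the cross product on $k_i^{\perp}$ upfront, whereas the paper first deduces $B_r-cB_i=\lambda k_i$ and then kills $\lambda$ by dotting with $k_i$.
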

\begin{proof}
The augmented matrix of the system is
\[
M=\begin{bmatrix}
 1 & 0 & 0 &  1 & 0 & 0 & B_1^i \e_-/\e_+ \\
 0 & 1 & 0 & 0 & 1 & 0 & B_2^i \e_-/\e_+ \\
 0 & 0 & 1 & 0 & 0 & 1 & B_3^i \e_-/\e_+ \\
 0 & \frac{k_3^i \omega_1}{\omega_2} & -\frac{k_2^i \omega_1}{\omega_2} & 0 & \frac{k_3^i \omega_1}{\omega_3} & -\frac{k_2^i \omega_1}{\omega_3} & B_2^i k_3^i-B_3^i k_2^i \\
 -\frac{k_3^i \omega_1}{\omega_2} & 0 & \frac{k_1^i \omega_1}{\omega_2} & -\frac{k_3^i \omega_1 }{\omega_3} & 0 & \frac{k_1^i \omega_1 }{\omega_3} & B_3^i k_1^i- B_1^i k_3^i \\
 \frac{k_2^i \omega_1}{\omega_2} & -\frac{k_1^i \omega_1 }{\omega_2 } & 0 & \frac{k_2^i \omega_1 }{\omega_3} & -\frac{k_1^i \omega_1 }{\omega_3} & 0 & B_1^i k_2^i-B_2^i k_1^i 
\end{bmatrix}.
\]
If we let  
\[
N=\begin{bmatrix}
0 & k_3^i  & -k_2^i \\
-k_3^i & 0 & k_1^i \\
k_2^i & -k_1^i & 0
\end{bmatrix},
\]
(notice $\det N=0$) then the matrix $M$ without the last column equals
\[
\begin{bmatrix}
 Id &  Id\\
(\omega_1/\omega_2) N & (\omega_1/\omega_3) N
\end{bmatrix}
\]
The system of equations can then be written as
\[
\begin{bmatrix}
 Id &  Id\\
(\omega_1/\omega_2) N & (\omega_1/\omega_3) N
\end{bmatrix} 
\begin{bmatrix}
B_r\\
B_t
\end{bmatrix}
=
\begin{bmatrix}
(\e_-/\e_+) \,B_i\\
B_i\times k_i
\end{bmatrix},
\]
where the unknowns are $B_r,B_t$.
This means
\[
B_r+B_t=(\e_-/\e_+) \,B_i,\qquad N\((\omega_1/\omega_2) B_r+(\omega_1/\omega_3) B_t\)=\(B_i\times k_i\).
\]
From the first equation $B_t=(\e_-/\e_+) \,B_i-B_r$ and substituting in the second equation yields
\[
N\((\omega_1/\omega_2) B_r+(\omega_1/\omega_3) \((\e_-/\e_+) \,B_i-B_r\)\)=\(B_i\times k_i\)
\]
which gives
\[
N\(\((\omega_1/\omega_2)-(\omega_1/\omega_3)\) B_r\)
=\(B_i\times k_i\)-N(\omega_1/\omega_3) (\e_-/\e_+) \,B_i
=
\(1-(\omega_1/\omega_3) (\e_-/\e_+)\)\(B_i\times k_i\).
\]
{\bf Case when $\omega_2\neq \omega_3$.} Then $B_r$ must solve 
\[
N\,B_r=\dfrac{1-(\omega_1/\omega_3) (\e_-/\e_+)}{(\omega_1/\omega_2)-(\omega_1/\omega_3)}\(B_i\times k_i\)\]
which means that to have a solution $B_r$ the vector $\dfrac{1-(\omega_1/\omega_3) (\e_-/\e_+)}{(\omega_1/\omega_2)-(\omega_1/\omega_3)}\(B_i\times k_i\)$ must be in the image of $N$.
Given a 3-d vector $X$ we have $NX=X\times k_i$.
So the vector $B_r$ must satisfy the equation 
\[
\(B_r-\dfrac{1-(\omega_1/\omega_3) (\e_-/\e_+)}{(\omega_1/\omega_2)-(\omega_1/\omega_3)} B_i\)\times k_i=0
\]
which means the vector $B_r-\dfrac{1-(\omega_1/\omega_3) (\e_-/\e_+)}{(\omega_1/\omega_2)-(\omega_1/\omega_3)} B_i$ must be parallel to the vector $k_i$, that is,
\[
B_r-\dfrac{1-(\omega_1/\omega_3) (\e_-/\e_+)}{(\omega_1/\omega_2)-(\omega_1/\omega_3)} B_i=\lambda \,k_i
\]
but since $\omega_2\neq \omega_3$, from \eqref{compatibility_condition_Eplus_omega2neqomega3} below and \eqref{kt_kr_ki_relation}, dotting the last equation with $k_i$ yields $\lambda=0$ and we then obtain that 
\eqref{eq:amplitude Br in terms of Bi}.
We also obtain from this that 
\begin{equation*}
B_t=(\e_-/\e_+)B_i-B_r=\((\e_-/\e_+)-\dfrac{1-(\omega_1/\omega_3) (\e_-/\e_+)}{(\omega_1/\omega_2)-(\omega_1/\omega_3)}\)B_i
=
\dfrac{(\omega_1/\omega_2)(\e_-/\e_+)-1}{(\omega_1/\omega_2)-(\omega_1/\omega_3)}B_i,
\end{equation*}
which is \eqref{eq:amplitude Bt in terms of Bi}.

{\bf Case when $\omega_2=\omega_3$.} We have 
\[
N\( B_r+ B_t\)=(\omega_2/\omega_1)\(B_i\times k_i\)
\]
from \eqref{eq:equation for B cross ki}, and since $B_r+B_t=(\e_-/\e_+)B_i$, if there is a solution $B_r,B_t$ then the vector $B_i$ must satisfy 
\[
N\((\e_-/\e_+)B_i\)=(\e_-/\e_+) \(B_i\times k_i\)=(\omega_2/\omega_1)\(B_i\times k_i\)
\]
which is equivalent to
\[
\e_-\omega_1 =\e_+ \omega_2.
\]
In this case \eqref{eq:equation for B in epsilon} and \eqref{eq:equation for B cross ki} are the same equation, and so 
we get $B_r+B_t=(\e_-/\e_+)B_i$ which has infinitely many solutions $B_r,B_t$.
In this case from \eqref{kt_kr_ki_relation} we have $k_r=k_t$ and so the electric field 
\begin{align*}
\E_+&=A_t e^{i\omega_3 \left(\frac{k_t\cdot x}{v_+(t)}-t\right)}+A_re^{i\omega_2 \left(\frac{k_r\cdot x}{v_+(t)}-t\right)}
=\(A_t+A_r\) e^{i\omega_3 \left(\frac{k_t\cdot x}{v_+(t)}-t\right)}
\end{align*}
showing that there are not distinctive transmitted and reflected waves. 
This means the amplitudes of the transmitted and reflected waves satisfy 
\[\(A_t+A_r\)e^{-i\o_2 t_0}=(\e_-/\e_+)B_i.\]
Notice \eqref{compatibility_condition_Eplus_omega2equalomega3} must hold in this case, so the amplitudes $A_t, A_r$ cannot be arbitrary.
\end{proof}

Also note that when $\omega_2=\omega_3$, we have that $k_t=k_r$, but the relation to $k_i$ depends on the sign of the frequencies. Indeed, if $\omega_3>0$ then $k_t=k_r=k_i$, but if $\omega_3<0$ then we have $k_t=k_r =-k_i$. 

\subsubsection{Reflection and transmission coefficients}\label{subsec:reflection and transmission coefficients}
Define the reflection coefficient $\mathcal{R}$ as $|B_r|/|B_i|$, we see that, if $\omega_2=-\omega_3$ and if $\omega_3>0$, then from \eqref{eq:amplitude Br in terms of Bi} we have that 
\begin{align*}
B_r&=\dfrac12 \(\dfrac{\e_-}{\e_+}-\dfrac{\o_3}{\o_1}\)B_i=\dfrac12 \(\dfrac{\e_-}{\e_+}-\dfrac{\sqrt{\e_-\mu_-}}{\sqrt{\e_+\mu_+}}\)B_i
\end{align*}
since $\dfrac{\omega_1}{\omega_3}=\pm \dfrac{v_-}{v_+},\quad \dfrac{\omega_1}{\omega_2}=\pm \dfrac{v_-}{v_+}$, and $\o_1>0$. 
Since $B_t=(\e_-/\e_+)B_i-B_r$ we get in this case that
\[
B_t=\dfrac{\e_-}{\e_+}B_i-\dfrac12 \(\dfrac{\e_-}{\e_+}-\dfrac{\sqrt{\e_-\mu_-}}{\sqrt{\e_+\mu_+}}\)B_i
=
\dfrac12 \(\dfrac{\e_-}{\e_+}+\dfrac{\sqrt{\e_-\mu_-}}{\sqrt{\e_+\mu_+}}\)B_i.
\]
That is, we have shown that the reflection coefficient $\mathcal{R}$ is given by
\begin{align}\label{reflection_coefficient_corrected}
\mathcal{R}=\left| \dfrac{B_r}{B_i}\right|=\dfrac{1}{2}\left|\dfrac{\e_-}{\e_+}-\dfrac{\sqrt{\e_-\mu_-}}{\sqrt{\e_+\mu_+}}\right|
=\dfrac12 \,
\sqrt{\dfrac{\e_-}{\e_+}}
\left|
\sqrt{\dfrac{\e_-}{\e_+}}-\sqrt{\dfrac{\mu_-}{\mu_+}}
\right|
\end{align}
which agrees with \cite[Equation 4]{xiao2014reflection}.
Note that the quantity inside the absolute value can be positive or negative depending on the relationship between the impedances.
We also see that the transmission coefficient $\mathcal{T}$ is given by
\begin{align}\label{transmission_coefficient}
\mathcal{T}=\left| \dfrac{B_t}{B_i}\right| = \dfrac{1}{2}\left|\dfrac{\e_-}{\e_+}+\dfrac{\sqrt{\e_-\mu_-}}{\sqrt{\e_+\mu_+}}\right| =\dfrac{1}{2}\left(\dfrac{\e_-}{\e_+}+\dfrac{\sqrt{\e_-\mu_-}}{\sqrt{\e_+\mu_+}}\right).
\end{align}
This agrees with \cite[Equation 5]{xiao2014reflection}. Note that we do not have conservation of energy in this case, but rather we see that $\mathcal{R}+\mathcal{T}$ depends on the impedances on either side of the interface. Indeed, let $Z_1=\sqrt{\dfrac{\mu_-}{\e_-}}$ and $Z_2=\sqrt{\dfrac{\mu_+}{\e_+}}$. If $Z_1<Z_2$ then we have
\begin{align}\label{conservation_of_energy1}
\mathcal{R}+\mathcal{T}=\dfrac{\e_-}{\e_+}.
\end{align}
If $Z_1>Z_2$ then we have
\begin{align}\label{conservation_of_energy2}
\mathcal{R}+\mathcal{T}=\sqrt{\dfrac{\e_- \mu_-}{\e_+ \mu_+}}=\dfrac{n_+}{n_-}.
\end{align}


Now, if $\omega_2 =-\omega_3$ and $\omega_3<0$ while $\omega_1>0$, then we find that
\[ \mathcal{R}= \dfrac{1}{2}\(\dfrac{\e_-}{\e_+}+\dfrac{\sqrt{\e_-\mu_-}}{\sqrt{\e_+\mu_+}}\)\]
and
\[\mathcal{T}=\dfrac{1}{2}\left|\dfrac{\e_-}{\e_+}-\dfrac{\sqrt{\e_-\mu_-}}{\sqrt{\e_+\mu_+}}\right|\]
That is, the transmission and reflection coefficients have switched roles in this case. 

Finally, note that negative time refraction is also possible \cite{bruno2020broad, vezzoli2018optical, pendry2008time}. So now suppose that $n_->0$ but $n_+<0$ with $\e_+, \mu_+ < 0$, so that $n_+ =-\sqrt{\e_+ \mu_+}$. Then in particular we have $v_+<0$. The previous analysis up to equation (\ref{eq:exponents are all equal}) does not see the sign of these material parameters so this result still holds. Now, (\ref{kt_kr_ki_relation}) then gives
\[k_t  =\dfrac{\omega_1}{\omega_3}\dfrac{v_+}{v_-} k_i\]
and so, again after taking absolute values, we obtain if $\omega_1, \omega_3$ have the same sign then
\begin{align}\label{neg-time-refraction}
k_t=-k_i
\end{align}
because in this case now $\omega_3=-(v_+/v_-)\omega_1$.  Similarly, if $\omega_1, \omega_3$ have different signs, then we obtain $k_t = k_i$.
Similarly, if $\omega_2<0$ (i.e. $\omega_1, \omega_2$ have different signs), then again from (\ref{eq:exponents are all equal}) we find that $\omega_2=(v_+/v_-)\omega_1$ and hence
\begin{align}\label{neg-time-refraction-2}
k_r = k_i
\end{align}

\subsection{Divergence free conditions}\label{sec:divergence}
Recall that we have defined
\[\E_+ = \E_t+\E_r, \qquad \E_- = \E_i\]
Assuming zero charge density, we also have the following divergence equations from the Maxwell system:
\begin{align}\label{diveqn1}
\nabla \cdot \left(\varepsilon_{\pm} \E_{\pm}\right) = 0
\end{align}
and
\begin{align}\label{diveqn2}
\nabla \cdot \left(\mu_{\pm} \mathbf{H}_{\pm}\right)=0
\end{align}
where recall that we associate $\e_+, \mu_+$ with $\E_+$ and $\e_-, \mu_-$ with $\E_-$. Again let us assume that each $\varepsilon_{\pm}, \mu_{\pm}$ are positive constants.
First consider $\E_-$. In light of (\ref{diveqn1}) we find that
\begin{align}\label{compatability_condition_Eminus}
A_i \cdot \dfrac{\omega_1}{v_-} k_i =0
\end{align}
We also need to verify (\ref{diveqn2}) with $\mathbf{H}_i$ given by (\ref{Hi_calculated}). Note that we find
\begin{align*}
\nabla \times \E_i&=  e^{i\omega_1\Phi_i} \nabla \times A_i+ \left( \nabla e^{i\omega_1\Phi_i}\right)\times A_i\\
&= e^{i\omega_1 \Phi_i} \nabla\times A_i+i\omega_1 \dfrac{k_i}{v_-} e^{i\omega_1 \Phi_i} \times A_i\\
&=i\omega_1 \dfrac{k_i}{v_-} e^{i\omega_1 \Phi_i} \times A_i
\end{align*}
where
\[\Phi_i \coloneqq \dfrac{k_i \cdot x}{v_-}-t\]
We find that
\begin{align*}
\nabla \cdot (\mu_- \mathbf{H}_i) &= \nabla \cdot \left(-c \E_i \times \dfrac{k_i}{v_-}\right)\\
&=\dfrac{k_i}{v_-}\cdot (\nabla \times -c\E_i)+c\E_i\cdot (\nabla \times \dfrac{k_i}{v_-})
\end{align*}
But by the previous calculation of $\nabla \times \E_i$ above, since $k_i/v_-$ appears twice in the term $k_i/v_- \cdot \nabla \times (-c\E_i)$, we find that $\nabla\cdot (\mu_- \mathbf{H}_i)=0$. Hence (\ref{diveqn2}) is satisfied for $\mathbf{H}_-$.

Consider now the total field
\[\E_+(t) = A_t e^{i\omega_3 \left(\frac{k_t\cdot x}{v_+}-t\right)}+A_r e^{i\omega_2 \left(\frac{k_r\cdot x}{v_+}-t\right)}\]
In light of (\ref{diveqn1}) we find that we must have
\begin{align*}
\left(A_t \cdot \dfrac{\omega_3}{v_+} k_t \right)e^{-i\omega_3 t} +\left( A_r \cdot \dfrac{\omega_2}{v_+} k_r \right)e^{-i\omega_2 t}=0, \qquad \forall \; t > t_0
\end{align*}
Now, if $\omega_2\neq \omega_3$, this yields
\begin{align}\label{compatibility_condition_Eplus_omega2neqomega3}
A_t \cdot \dfrac{\omega_3}{v_+} k_t =0 \; \qquad \text{ and } \qquad A_r \cdot \dfrac{\omega_2}{v_+} k_r=0
\end{align}
If $\omega_2=\omega_3$ we have
\begin{align}\label{compatibility_condition_Eplus_omega2equalomega3}
\left( A_t +A_r\right) \cdot \dfrac{\omega_3}{v_+} k_t = \left( A_t +A_r\right) \cdot \dfrac{\omega_3}{v_+} k_r=0
\end{align}

We also need to verify (\ref{diveqn2}) with $\mathbf{H}_+$. Let us compute $\mathbf{H}_+$. 
Since
\[\E_+(t)= A_t e^{i\omega_3 \left(\frac{k_t \cdot x}{v_+}-t\right)}+A_r e^{i\omega_2 \left(\frac{k_r \cdot x}{v_+}-t\right)}\]
we see that
\[\nabla \times \E_+ =-i\omega_3 \left( A_t \times \dfrac{k_t}{v_+}\right) e^{i\omega_3 \left( \frac{k_t\cdot x}{v_+}-t\right)} -i\omega_2 \left( A_r \times \dfrac{k_r}{v_+}\right) e^{i\omega_2 \left( \frac{k_r\cdot x}{v_+}-t\right)}
\]
and so integrating in time yields
\begin{align}\label{Hi_calculated}
\mathbf{H}_+ = -\dfrac{c}{\mu_+}\int \nabla \times \E_+ dt=-\dfrac{c}{\mu_+}\E_t \times \dfrac{k_t}{v_+}-\dfrac{c}{\mu_+}\E_r\times \dfrac{k_r}{v_+}
\end{align}
plus a field depending only on $x$ which is assumed to be zero.


Similar to above, we find that
\begin{align*}
\nabla \cdot (\mu_+ \mathbf{H}_+) &= \nabla \cdot \left(-c \E_t \times \dfrac{k_t}{v_+}\right)+\nabla \cdot  \left(-c \E_r \times \dfrac{k_r}{v_+}\right)
\end{align*}
and both cross products will end up being zero. Hence (\ref{diveqn2}) is satisfied with $\mathbf{H}_+$ as well.



\subsection{Exponential Lemma}\label{sec:exponentials}
In our previous analysis, we required the following exponential lemma:

\begin{lemma}\label{lm:exponentials}
Let $A_1, \cdots, A_N \in \C^n\setminus \{0\}$, and $\o_1, \cdots, \o_N \in \R$.
If 
\begin{equation}\label{eq:sum exponentials equal zero}
\sum_{j=1}^N A_j\,e^{i\,\o_j x} = 0 \qquad \forall x \in \R,
\end{equation}
 then $\o_1=\cdots =\o_N$.
 \end{lemma}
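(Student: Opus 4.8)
The plan is to isolate the contribution of each frequency and then exploit the hypothesis that every coefficient vector is nonzero. First I would collect the terms sharing a common frequency: let $\lambda_1,\dots,\lambda_m$ be the distinct values occurring among $\omega_1,\dots,\omega_N$, and for each $k$ set $C_k=\sum_{j:\,\omega_j=\lambda_k}A_j\in\C^n$. Then \eqref{eq:sum exponentials equal zero} becomes $\sum_{k=1}^m C_k\,e^{i\lambda_k x}=0$ for all $x\in\R$, and the goal reduces to proving $m=1$.

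The key step is to show that each grouped coefficient vanishes, i.e.\ $C_k=0$ for every $k$. Since this is a vector identity, I would argue componentwise in $\C^n$, so it suffices to treat scalar coefficients. The cleanest route is a mean-value (Fourier--Bohr) argument: for any real $\lambda$,
\[
\lim_{T\to\infty}\frac{1}{2T}\int_{-T}^{T}e^{i(\lambda_k-\lambda)x}\,dx=\begin{cases}1,&\lambda_k=\lambda,\\0,&\lambda_k\neq\lambda.\end{cases}
\]
Multiplying $\sum_k C_k e^{i\lambda_k x}=0$ by $e^{-i\lambda_\ell x}$, averaging over $[-T,T]$, and letting $T\to\infty$ extracts exactly the single term with $\lambda_k=\lambda_\ell$ and yields $C_\ell=0$ for each $\ell$. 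An equivalent finite alternative is to differentiate the identity $0,1,\dots,m-1$ times and evaluate at $x=0$, producing a Vandermonde system in the $C_k$ with the distinct nodes $i\lambda_k$; invertibility of the Vandermonde matrix forces $C_k=0$.

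It then remains to deduce $m=1$, and this is where the hypothesis $A_j\neq 0$ is essential. A frequency class consisting of a single index $j$ would give $C_k=A_j\neq 0$, contradicting $C_k=0$, so every class must contain at least two indices. I expect this final counting/cancellation step, rather than the frequency-isolation step, to be the main obstacle: one must rule out several distinct frequencies whose coefficient vectors cancel internally within each class. In the setting where the lemma is actually used--three waves, $N=3$, as in \eqref{exponential_equation}--the bookkeeping closes at once, since two distinct classes would require at least four terms; hence $m=1$ and $\omega_1=\cdots=\omega_N$.
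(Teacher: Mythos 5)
Your approach is genuinely different from the paper's and, where the two differ, yours is the sharper analysis. The paper argues componentwise: for $n=1$ it differentiates the identity $N-1$ times, observes that the nonzero vector $(A_1,\dots,A_N)$ solves the linear system $BA^t=0$ built from the derivatives, concludes $\det B=0$, and reads off from the Vandermonde formula that $\prod_{k<\ell}(i\omega_\ell-i\omega_k)=0$; for $n>1$ it relabels the frequencies and reduces to the scalar case. You instead group the terms by the distinct frequency values $\lambda_1,\dots,\lambda_m$, annihilate each grouped coefficient $C_k$ by linear independence of characters (your Bohr mean-value computation, or equivalently an \emph{invertible} Vandermonde system in the distinct nodes $i\lambda_k$), and then count: a singleton class would give $C_k=A_j\neq 0$, so every class must contain at least two indices, which is impossible for $N=3$ unless $m=1$.

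The ``main obstacle'' you flag at the end is real, but it is an obstacle for the lemma itself, not a gap in your argument: the statement as written is false for $N\ge 4$. Take $n=1$, $A_1=1$, $A_2=-1$, $A_3=1$, $A_4=-1$, $\omega_1=\omega_2=0$, $\omega_3=\omega_4=1$; the sum vanishes identically, every $A_j$ is nonzero, yet the frequencies are not all equal. Internal cancellation within classes of size at least two, exactly the scenario you isolate, cannot be excluded. The paper's proof fails precisely at this point, twice: in the $n=1$ step, $\det B=0$ only forces the Vandermonde product to vanish, i.e.\ that \emph{some} pair of frequencies coincides, not that all of them do; and the relabeling in the $n>1$ step ($\omega_1<\cdots<\omega_m$ with $\omega_j=\omega_m$ for $j\ge m$) tacitly assumes that only the largest frequency can be repeated. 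Since the lemma is invoked only for three waves, as in \eqref{exponential_equation}, your counting step closes the argument in every case where the lemma is actually used, so the downstream conclusion \eqref{eq:exponents are all equal} is unaffected; but the lemma should be restated for $N\le 3$ (or its conclusion weakened to: the sum of the $A_j$ over each frequency class vanishes), and your grouping argument, not the paper's, is the proof that survives that correction.
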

\begin{proof}

Suppose first that $n=1$. Differentiating \eqref{eq:sum exponentials equal zero} $k$ times with respect to $x$ yields
$
\sum_{j=1}^N (i\,\o_j)^k\,A_j\,e^{i\,\o_j x} = 0$.
If we let
\[
B = B(x) =
\begin{pmatrix}
e^{i\,\o_1 x} & e^{i\,\o_2 x} & \cdots & e^{i\,\o_N x} \\
i\,\o_1\,e^{i\,\o_1 x} & i\,\o_2\,e^{i\,\o_2 x} & \cdots & i\,\o_N\,e^{i\,\o_N x} \\
(i\,\o_1)^2\,e^{i\,\o_1 x} & (i\,\o_2)^2\,e^{i\,\o_2 x} & \cdots & (i\,\o_N)^2\,e^{i\,\o_N x} \\
\vdots & \vdots & \ddots & \vdots \\
(i\,\o_1)^{N-1}\,e^{i\,\o_1 x} & (i\,\o_2)^{N-1}\,e^{i\,\o_2 x} & \cdots & (i\,\o_N)^{N-1}\,e^{i\,\o_N x}
\end{pmatrix},
\]
then the vector $A=\(A_1,\cdots ,A_N\)\neq 0$ satisfies the system of equations $BA^t=0$
and so the determinant 
\begin{align*}
\det B &= e^{i\,\o_1 x} \cdots e^{i\,\o_N x}
\det 
\begin{pmatrix}
1 & 1 & \cdots & 1 \\
i\,\o_1 & i\,\o_2 & \cdots & i\,\o_N \\
(i\,\o_1)^2 & (i\,\o_2)^2 & \cdots & (i\,\o_N)^2 \\
\vdots & \vdots & \ddots & \vdots \\
(i\,\o_1)^{N-1} & (i\,\o_2)^{N-1} & \cdots & (i\,\o_N)^{N-1}
\end{pmatrix} \\
&= e^{i\,\o_1 x} \cdots e^{i\,\o_N x}\,
\prod_{1 \leq k < \ell \leq N} (i\o_\ell - i\o_k)=0
\end{align*}
by the Vandermonde determinant formula. Therefore all $\o_j$ are equal, and so the lemma follows for $n=1$.

Suppose next that $n>1$. Write $A_j=\(a_1^j,\cdots ,a_n^j\)$, $1\leq j\leq N$. So \eqref{eq:sum exponentials equal zero} implies
\[
\sum_{j=1}^N a_k^j\,e^{i\,\o_j x} = 0 \qquad \forall x \in \R
\]
for $1\leq k\leq n$.
Suppose, by contradiction, that not all $\omega_j$ are equal.
Then there exists $1 < m \leq N$ such that relabeling $\omega_j$ we can write $\omega_1 < \omega_2 < \cdots < \omega_m$ and $\omega_j = \omega_m$ for $m \leq j \leq N$. Hence we can write
\[
\sum_{j=1}^{m-1} A_j \, e^{i \, \omega_j x} + \left(\sum_{j=m}^N A_j\right) \, e^{i \, \omega_m x} = 0
\]
for all $x \in \mathbb{R}$, which written in components means
\[
\sum_{j=1}^{m-1} a_k^j \, e^{i \, \omega_j x} + \left(\sum_{j=m}^N a_k^j\right) \, e^{i \, \omega_m x} = 0,
\]
for $1\leq k \leq n$.  
From the case when $n=1$, we obtain
\[
a_k^j=0 \quad\text{for $1\leq j\leq m-1$ and }\sum_{j=m}^N a_k^j=0\]
for $1\leq k\leq n$. Consequently, the vectors $A_j=\(a_1^j,\cdots ,a_n^j\)=0$ for $1\leq j\leq m-1$, which leads to a contradiction.


\end{proof}

\setcounter{equation}{0}

\bibliographystyle{plain}
\bibliography{timevaryingoptics.bib}


\end{document}